\newtheorem{theorem}{Theorem}[section]
\newtheorem{lemma}[theorem]{Lemma}
\newtheorem{corollary}[theorem]{Corollary}
\newtheorem{proposition}[theorem]{Proposition}
\theoremstyle{definition}
\theoremstyle{remark}
\newtheorem{remark}[theorem]{Remark}
\numberwithin{equation}{section}
\def\bfb{{\mathbf b}}
\def\bfc{{\mathbf c}}
\def\bft{{\mathbf t}}
\def\bfu{{\mathbf u}}
\def\bfv{{\mathbf v}}
\def\bfx{{\mathbf x}}
\def\A{{\mathbb A}}
\def\P{{\mathbb P}}
\def\Z{{\mathbb Z}}\def\Q{{\mathbb Q}}
\def\grm{{\mathfrak m}}
\def\grn{{\mathfrak n}}
\def\grp{{\mathfrak p}}
\def\grP{{\mathfrak P}}
\def\grp{{\mathfrak p}}
\def\alp{{\alpha}} \def\bfalp{{\boldsymbol \alpha}}
\def\kap{{\kappa}}
\def\lam{{\lambda}}
\def\eps{\varepsilon}
\def\Pic{{\rm Pic}}
\def\Spec{{\rm Spec}}
\def\int{{\rm int}}
\def\Id{{\rm Id}}
\def\Qbar{{\overline{\Q}}}
\newenvironment{blue}{\color{blue}}{}
\DeclareMathOperator{\Br}{Br}
\DeclareMathOperator{\Gal}{Gal}
\DeclareMathOperator{\Disc}{Disc}
\begin{document}

\renewcommand{\thefootnote}{\fnsymbol{footnote}}
\author[J\"org Jahnel]{J\"org Jahnel}

\address{\mbox{Department Mathematik\\ \!Univ.\ \!Siegen\\ \!Walter-Flex-Str.\ \!3\\ \!D-57068 \!Siegen\\ \!Germany}}
\email{jahnel@mathematik.uni-siegen.de}
\urladdr{http://www.uni-math.gwdg.de/jahnel}

\author[Damaris Schindler]{Damaris Schindler${}^{*}$}

\address{Mathematisch Instituut\\ \!Universiteit \!Utrecht\\ \!Budapestlaan~6\\ \!NL-3584 \!CD \!Utrecht\\ The Netherlands}
\email{d.schindler@uu.nl}
\urladdr{http://www.uu.nl/staff/DSchindler}

\title[On the frequency of algebraic Brauer classes]{On the frequency of algebraic Brauer classes on certain log K3 surfaces}

\date{\today}

\maketitle

%\tableofcontents

\footnotetext[1]{The second author was supported by the {\em NWO\/} Veni Grant No.\ 016.Veni.173.016.}
\footnotetext[2]{Mathematics Subject Classification 2010, {Primary 11E12; Secondary 14G20, 14G25, 14J20}}

\begin{abstract}
Given systems of two (inhomogeneous) quadratic equations in four variables, it is known that the Hasse principle for integral points may fail. Sometimes this failure can be explained by some integral Brauer-Manin obstruction. We study the existence of a non-trivial algebraic part of the Brauer group for a family of such systems and show that the failure of the integral Hasse principle due to an algebraic Brauer-Manin obstruction is rare, as for a generic choice of a system the algebraic part of the Brauer-group is trivial. We use resolvent constructions to give quantitative upper bounds on the number of exceptions. 
\end{abstract}

\section{Introduction}

We consider a system of two rational quadratic forms in $5$ variables given by
$$Q_1(\bfx)=\bfx^t A\bfx,\quad Q_2(\bfx) =\bfx^tB\bfx,$$
where $A,B\in M_{5\times 5}(\Q)$ are two symmetric matrices with rational entries. For a generic choice of $A$ and $B$, the intersection 
\begin{equation}\label{eqn1}
S_{A,B}:\quad Q_1(\bfx)=Q_2(\bfx)=0
\end{equation}
defines a del Pezzo surface of degree four in $\P_{\Q}^4$. It is well known that the Hasse principle and weak approximation may fail for such surfaces (see for example \cite{BSD75}). In all known examples, for del Pezzo surfaces of degree four, the failure of the Hasse principle can be explained by a Brauer-Manin obstruction. I.e., in all of these situations one has adelic points on $S_{A,B}$ but the Brauer-Manin set $S_{A,B}(\A_\Q)^{\Br}$, that is the subset of adelic points that are in the kernel of the Brauer-Manin pairing with the Brauer group $\Br(S_{A,B})$, is empty.\par
More recently, Colliot-Th\'el\`ene and Xu \cite{CX09}, initiated the study of integral Brauer-Manin obstructions. In \cite{CX09}, they studied integral points on homogeneous spaces and representation problems by integral quadratic forms. In another direction, the concept of Brauer-Manin obstructions for affine varieties has been pursued in \cite{CTWit12} for families of affine cubic surfaces, such as representation problems of an integer by sums of three cubes. Moreover, see work of Bright and Lyczak \cite{BLA17} for certain log K3 surfaces and Berg \cite{BerA17} on the description of the Brauer-Manin obstruction for affine Ch{\^a}telet surfaces.\par
In the situation of a del Pezzo surface of degree four, one may choose a hyperplane $H\subset \P_{\Q}^4$ and consider the complement ${U:=S_{A,B}\setminus H}$. Integral points on an integral model $\mathcal{U}$ of $U$ then essentially correspond to integer solutions of systems of two inhomogeneous quadratic equations in four variables.  When does such a system have an integer solution? A necessary condition is that it has real solutions and solutions in $\Z_p$ for every prime $p$. However, these conditions may not be sufficient. Again, there could be a Brauer-Manin obstruction leading to a violation of the local-global principle. We are interested in the question how often one should expect a violation of the local-global principle due to a Brauer-Manin obstruction. A Brauer-Manin obstruction can only occur if there are non-constant Brauer classes in the Brauer group of the scheme $U$. The Brauer classes which vanish after a base change to the algebraic closure of $\Q$ are most accessible to computations. In this article we study upper bounds on how often the algebraic part of the Brauer-group is non-trivial for certain families of systems of two inhomogeneous quadratic equations in four variables. It would be very interesting to find lower bounds as well (for certain families see \cite{JS}), or get a prediction of the density of such examples within a given family.
% And again, there may be a non-trivial Brauer group of the scheme $U$ that is responsible for the failure of the integral Hasse principle for such systems. This leads one to the question of in first place understanding the Brauer group of the $\Q$-scheme $U$ and its evaluation on adelic points on $U$. 
\par
In \cite{JS}, the authors computed a list of the algebraic parts of the Brauer group that can occur for such surfaces assuming that the intersection $S_{A,B}\cap H$ is geometrically integral. Note that these results were recently extended to del Pezzo surfaces of degree at most $7$ by M. Bright and J. Lyczak \cite{BLA17}. We recall that the list in the case of the complement of a hyperplane section in a del Pezzo surface of degree four consists of $0,\Z/2\Z$, $(\Z/2\Z)^2$, $(\Z/2\Z)^3$, $(\Z/2\Z)^4$, $\Z/4\Z$, $\Z/2\Z\times \Z/4\Z$ and $(\Z/2\Z)^2\times \Z/4\Z$. Under the assumption that $H\cap S_{A,B}$ is geometrically irreducible (for a more general criterion see \cite[Lemma 4.1]{JS}), the algebraic part of the Brauer group is given by the first Galois cohomology group
$$\Br_1(U)/\Br_0(U)\cong H^1(\Gal (\bar\Q/\Q),\Pic (U_{\bar\Q})).$$
Moreover, any hyperplane $H$ lies in the anticanonical class and hence the algebraic part of the Brauer group is independent of which rational hyperplane we remove as long as we assume $H\cap S_{A,B}$ to be geometrically irreducible.\par
In this note, we study the question how often one typically expects to find a non-trivial algebraic part of the Brauer group $\Br_1(U)/\Br_0(U)$, where we vary over certain subfamilies of $S_{A,B}$. We fix a matrix $A\in M_{5\times 5}(\Z)$ with $\det (A)\neq 0$ and vary over integral matrices $B$. Note that if the intersection in (\ref{eqn1}) is smooth of codimension 2 as a projective scheme in $\P^4$ then $S_{A,B}$ is indeed a del Pezzo surface of degree four.\par
We define $N_{2}(P)$ to be the number of symmetric matrices $B=(b_{ij})_{1\leq i,j\leq 5}\in M_{5\times 5}(\Z)$ with $|b_{ij}|\leq P$ for all $1\leq i\leq j\leq 5$, such that $S_{A,B}$ is smooth of codimension $2$ and $\Br_1(U)/ \Br_0(U)\neq 0$ for any hyperplane $H$ with $H\cap S_{A,B}$ geometrically irreducible.\par

\begin{theorem}\label{prop1}
Let $A\in M_{5\times 5}(\Z)$ and assume that $\det(A)\neq 0$. Then, for every $\eps >0$, one has the upper bound
$$N_{2}(P)\ll_{\eps,A} P^{14+1/5+\eps}.$$
\end{theorem}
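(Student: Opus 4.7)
The starting point is the isomorphism recalled in the introduction,
\[
\Br_1(U)/\Br_0(U)\;\cong\;H^1\!\bigl(\Gal(\overline{\Q}/\Q),\Pic(U_{\overline{\Q}})\bigr),
\]
valid whenever $H\cap S_{A,B}$ is geometrically integral. For a smooth del Pezzo surface of degree four, the Galois module $\Pic(U_{\overline{\Q}})$ is controlled by the Galois action on the five singular members of the pencil $\{\lambda Q_1+\mu Q_2\}_{[\lambda:\mu]\in\P^1}$, equivalently on the roots in $\P^1(\overline{\Q})$ of the binary quintic form
\[
F_{A,B}(\lambda,\mu)\;=\;\det(\lambda A+\mu B)\;\in\;\Z[\lambda,\mu].
\]
For a generic $B$ this Galois group equals $S_5$ and the above cohomology group vanishes; hence $\Br_1(U)/\Br_0(U)\neq 0$ forces the Galois group of $F_{A,B}$ into one of finitely many proper subgroups $G\subsetneq S_5$ on the ``bad'' list dictated by the classification of \cite{JS}.

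To each such $G$ I would attach an explicit resolvent polynomial $R_G(T)\in\Z[T]$ whose coefficients are integer polynomials in $(a_{ij},b_{ij})$ of universally bounded degree, with the property that the Galois group of $F_{A,B}$ is contained in $G$ only if $R_G$ has a $\Q$-rational root (the case $G\subseteq A_5$ is replaced by $\Disc F_{A,B}\in(\Q^\times)^2$, handled by an analogous square-root counting). This reduces Theorem \ref{prop1} to bounding, for each of the finitely many $R_G$, the number of symmetric $B\in M_{5\times5}(\Z)$ with $|b_{ij}|\le P$ for which $R_G$ admits a rational zero.

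For the main resolvent $R(T)=\det(A+TB)$: writing a putative rational root as $T=p/q$ with $\gcd(p,q)=1$ and invoking the rational-root theorem gives $p\mid\det A$, so $|p|=O_A(1)$, while $q\mid\det B$, so $|q|\ll P^5$. For fixed $(p,q)$ the equation $\det(qA+pB)=0$ defines a degree-$5$ hypersurface in the 15-dimensional affine space of symmetric matrices, and Salberger's dimension-growth theorem yields $\ll_\eps P^{14+\eps}$ integer points in the cube $[-P,P]^{15}$. A naive sum over $(p,q)$ is insufficient, and the claimed bound requires exploiting the range of $q$.

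The exponent $\tfrac{1}{5}$ emerges from a two-range split. For $|q|\le P^{1/5}$ there are $O_A(P^{1/5})$ pairs $(p,q)$, each contributing $\ll_\eps P^{14+\eps}$ by dimension-growth, for a total of $\ll_\eps P^{14+1/5+\eps}$. For $|q|>P^{1/5}$ one exploits the consequence $q\mid\det B$: this is a nontrivial divisibility constraint, confining $B$ to a thin subset of the cube of expected density $\sim 1/|q|$, and a refined geometry-of-numbers count on the singular hypersurface $\det(qA+pB)=0$ inside this thin subset should yield $\ll_\eps P^{14+\eps}/|q|$ integer points per pair. The remaining sum $\sum_{|q|>P^{1/5}} P^{14+\eps}/|q|$ is $\ll_\eps P^{14+\eps}\log P$, absorbed into the main term. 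The hardest step is this large-$|q|$ analysis, where a direct application of dimension-growth is insensitive to the divisibility constraint and a more delicate lattice-point estimate on the singular symmetric locus is required. All other resolvents $R_G$ (the discriminant-square, sextic resolvent, etc.) are handled by the same two-range strategy and contribute bounds that are absorbed into $P^{14+1/5+\eps}$.
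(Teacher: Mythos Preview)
Your reduction to resolvent root-counting is on the right track, but the large-$|q|$ analysis has a fatal flaw. You propose to gain a factor $1/|q|$ from the divisibility constraint $q\mid\det B$, treating it as independent of the hypersurface equation $\det(qA+pB)=0$. But reducing modulo $q$ gives $\det(qA+pB)\equiv p^5\det(B)\pmod q$, and since $\gcd(p,q)=1$, the hypersurface equation already forces $q\mid\det B$. The divisibility constraint is therefore vacuous and yields no additional saving. Without it, summing the hypersurface bound $P^{14}$ over $|q|\le P$ (which is the correct range, by the root bounds you yourself would need) gives only the trivial $P^{15}$. Your two-range split thus collapses: the mechanism you propose for the exponent $1/5$ does not work.

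The paper obtains the $1/5$ by an entirely different route, following Dietmann. First, the reduction is simpler than the full Galois classification: by \cite[Corollary~3.11]{JS2} (Theorem~\ref{thm2.2} here), a nontrivial $\Br_1(U)/\Br_0(U)$ forces $f(\lambda,\mu;A,B)$ either to have a rational root or to split off a quadratic factor; the latter is detected by a single degree-$10$ resolvent $\Phi(z;A,B)$. For the rational-root case, since the coefficient of $\lambda^5$ is $\det(A)$, a primitive root $(\lambda_0:\mu_0)$ has $\mu_0\mid\det(A)$, so there are only $O_A(1)$ values $d$ for $\mu_0$. For each such $d$, one does \emph{not} sum over $\lambda_0$; instead one fixes $14$ of the $15$ coordinates $b_{ij}$ (after a linear shear $b_{ij}=c_{ij}+v_{ij}b_{11}$ chosen via a Bertini-type argument, Proposition~\ref{irred5}, to guarantee absolute irreducibility) and views $f(\lambda,d;A,B)=0$ as a degree-$5$ plane curve in $(\lambda,b_{11})$. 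Bombieri--Pila (Theorem~\ref{thm2.3}) then gives $\ll P^{1/5+\eps}$ integer points on this curve, uniformly in the fixed coordinates; summing over the $O(P^{14})$ choices of $\bfc'$ yields $P^{14+1/5+\eps}$. The quadratic-factor case is handled identically with the degree-$10$ resolvent and contributes only $P^{14+1/10+\eps}$. The key difference from your plan is that the $1/5$ comes from Bombieri--Pila on a degree-$5$ curve, not from a range decomposition in an auxiliary integer parameter.
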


In total, there are about $P^{15}$ integer matrices $B$ that we consider, and hence the exceptional ones, counted by $N_2(P)$, are sparse in this very precise sense. In particular, for any choice of $B$ outside of the exceptional set counted by $N_2(P)$, there is no algebraic Brauer-Manin obstruction to integral points for the system
$$Q_1(\bfx)=Q_2(\bfx)=0,\quad L(\bfx)=\pm1,$$
where $L(\bfx)$ is a sufficiently general linear form in the variables $\bfx$ with integral coefficients. It would be interesting to understand the transcendental part of the Brauer group of $U$ as well, see for example \cite{JS} for some discussions.\par
The frequency of violations of the local-global principle has recently also been studied by Mitankin \cite{Mitankin} for affine quadric surfaces. Even more is known for a few selected examples of families of projective varieties, see for example \cite{JS16} and \cite{BB2}.\par
Our strategy of proof for Theorem \ref{prop1} is inspired by work of Dietmann \cite{Die}. He bounds the number of monic integer polynomials of degree $n$ of bounded height which have a certain Galois group strictly smaller than $S_n$.\par
Under the assumptions of Theorem \ref{prop1}, the set of matrices $B$, counted by $N_{2}(P)$, is a thin subset in $15$-dimensional affine space (see Remark \ref{rk10}). Therefore, sieve methods, as used in Theorem 13.1 in \cite{Serre}, would lead to the bound
\begin{equation}\label{eqn1b}
N_2(P)\ll P^{14+1/2}\log P.
\end{equation}
With our methods, we improve upon the exponent in this estimate.\vspace{0.2cm}\par

{\bf Acknowledgements:} We would like to thank the referees for carefully reading this paper and for their suggestions that improved the presentation of the material.

\section{Preparations}
With two symmetric $5\times 5$ matrices $A$ and $B$, we associate the characteristic polynomial 
$$f(\lam,\mu;A,B):=\det (\lam A+\mu B),$$
which is homogeneous of degree $5$ in $\lam,\mu$. Sometimes, we write $f(\lam,\mu)$ for $f(\lam,\mu;A,B)$ when the matrices $A$ and $B$ are considered fixed. We recall that we can read off from the polynomial $f(\lam,\mu)$ whether the associated variety $S_{A,B}$ is smooth of codimension $2$.

\begin{lemma}[Proposition 3.26 in \cite{Wi}]
Let $A,B\in M_{5\times 5}(\Q)$ be two symmetric matrices and $S_{A,B}$ be defined as in (\ref{eqn1}), where $Q_1$ and $Q_2$ are the two quadratic forms associated with $A$ and $B$. Then $S_{A,B}$ is smooth over $\Q$ and pure of codimension $2$ as a variety in $\P^4$ if and only if the characteristic polynomial $f(\lam,\mu)$ is not identically zero and separable.
\end{lemma}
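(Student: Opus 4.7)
The plan is to translate smoothness of codimension~$2$ for $S_{A,B}$ into a condition on singular quadrics in the pencil $\lam A + \mu B$, and then read this off from the factorization of $f(\lam,\mu)$.

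By the Jacobian criterion, a point $P \in S_{A,B}(\Qbar)$ is a smooth point at which $S_{A,B}$ has codimension~$2$ if and only if $AP$ and $BP$ are linearly independent; equivalently, no nonzero $(\lam_0,\mu_0)$ satisfies $(\lam_0 A + \mu_0 B)P = 0$. Hence the singular points of $S_{A,B}$ are precisely the points lying on $S_{A,B}$ that also arise as singular points of some quadric in the pencil.

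Next, I analyze multiple factors of $f$ at a point $(\lam_0:\mu_0) \in \P^1$ by a standard adjugate/Schur-complement computation. Set $M_0 := \lam_0 A + \mu_0 B$. If $M_0$ has corank $\geq 2$, then every $4\times 4$ minor of $M_0$ vanishes, so $\mathrm{adj}(M_0) = 0$ and expanding $\det(M_0 + \epsilon N)$ for a perturbation $N$ shows that $(\mu_0\lam - \lam_0\mu)^2 \mid f$ automatically; moreover $Q_1$ restricted to the projectivization of $\ker M_0$ is a quadratic form on a space of dimension $\geq 1$, which admits a nontrivial zero over $\Qbar$ and thereby produces a singular point of $S_{A,B}$. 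If $M_0$ has corank exactly~$1$ with kernel spanned by $P_0$, then $\mathrm{adj}(M_0) = \alpha P_0 P_0^t$ for some $\alpha \neq 0$, and the directional derivatives of $f$ at $(\lam_0:\mu_0)$ become nonzero scalar multiples of $Q_1(P_0)$ and $Q_2(P_0)$ (which are proportional thanks to $\lam_0 Q_1(P_0) + \mu_0 Q_2(P_0) = P_0^t M_0 P_0 = 0$); consequently $(\mu_0\lam - \lam_0\mu)^2 \mid f$ if and only if $P_0 \in S_{A,B}$. Combining these observations: if $f$ is nonzero and separable, every singular quadric in the pencil has corank~$1$ with its singular point off $S_{A,B}$, so $S_{A,B}$ has no singular points; codimension~$2$ purity follows because any common linear factor of $Q_1$ and $Q_2$ would make every quadric in the pencil factor as a product of linear forms (rank~$\leq 2$), forcing $f \equiv 0$. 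Conversely, if $f \not\equiv 0$ has a multiple factor, the analysis above produces a singular point of $S_{A,B}$.

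The main obstacle is ruling out $f \equiv 0$ when $S_{A,B}$ is smooth. Here I would observe that $A + tB$ has a nontrivial kernel over $\Qbar(t)$, so clearing denominators yields a polynomial vector $v(t) = v_0 + v_1 t + \cdots + v_d t^d$ with $v_0 \neq 0$ and $(A + tB)v(t) = 0$. Comparing coefficients of powers of $t$ gives the chain
\begin{align*}
Av_0 &= 0, \\
Av_{j+1} + Bv_j &= 0 \quad (0 \leq j < d), \\
Bv_d &= 0.
\end{align*}
Pairing each relation with $v_0$ and using $Av_0 = 0$ together with the symmetry of $A$ yields $v_0^t B v_j = 0$ for $0 \leq j \leq d-1$; in particular $Q_2(v_0) = 0$, while $Q_1(v_0) = 0$ is automatic. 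Hence $v_0 \in S_{A,B}$, and since $Av_0 = 0$ the point $v_0$ is singular on $S_{A,B}$, contradicting smoothness.
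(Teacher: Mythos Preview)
The paper does not prove this lemma at all; it merely quotes Proposition~3.26 of Wittenberg~\cite{Wi}. So there is no argument in the paper to compare against, and your proposal stands as an independent direct proof.

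Your argument is essentially correct and follows the classical route (Jacobian criterion, the adjugate formula $\text{adj}(M_0)=\alp P_0P_0^t$ for corank~$1$, and the kernel--chain trick for $f\equiv 0$). Two small points should be tightened. First, in the corank~$\ge 2$ case you restrict $Q_1$ to $\ker M_0$ and use the relation $\lam_0Q_1+\mu_0Q_2=0$ on that kernel to get $Q_2(P_0)=0$; this fails when $\mu_0=0$, since then $M_0=\lam_0A$ and $Q_1$ vanishes identically on $\ker A$ while the relation gives no information about $Q_2$. In that case one must instead restrict $Q_2$ to $\ker A$ (and symmetrically if $\lam_0=0$). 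Second, your codimension~$2$ argument only rules out a common \emph{linear} factor of $Q_1$ and $Q_2$; you should also dispose of the case where one form is a scalar multiple of the other (including the zero form), which makes $f$ a perfect fifth power or identically zero. Finally, your phrase ``singular point of $S_{A,B}$'' is mildly abusive: what you actually exhibit is a point where the Jacobian of $(Q_1,Q_2)$ has rank $\le 1$, which could equally well be a smooth point on a codimension~$1$ component. This is harmless for the biconditional as stated, but it is worth saying precisely, since the conclusion you need is ``not smooth of pure codimension~$2$'' rather than ``singular''.
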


Let $\Disc (f)$ be the discriminant of $f(\lam,\mu)$ (see for example \cite{Schur}). If we fix $A\in M_{5\times 5}(\Z)$ then $\Disc(f)$ is a polynomial in the coefficients $(b_{ij})$ of $B$. Moreover, $S_{A,B}$ is smooth of codimension $2$ if and only if $\Disc (f)\neq 0$. Hence we can trivially handle the count of matrices $B$ of bounded height, for which $S_{A,B}$ is not a del Pezzo surface of degree $4$. Note that $\Disc(f)$ is not identically zero if we assume that $\det(A)\neq 0$. In this case, we have
\begin{equation}\label{eqn3}
\sharp\{B\in M_{5\times 5}(\Z): B^T=B, |b_{ij}|\leq P\, \forall 1\leq i,j\leq 5, \Disc(f)=0\}\ll P^{14}.
\end{equation}
Note that in this estimate the implied constant does not depend on the matrix~$A$.\par
The roots of the characteristic polynomial $f(\lam,\mu)$ correspond to pairs of pencils of conics contained in $S_{A,B}$. These pencils of conics generate $\Pic(U_{\bar \Q})$ up to index two and the algebraic part of the Brauer group of $U$ is determined by the operation of the Galois group on these conics. Moreover, one can already read off from the characteristic polynomial $f(\lam,\mu)$ some information about $2$-torsion elements contained in $\Br_1(U)/\Br_0(U)$. Note that, if $H\cap S_{A,B}$ is geometrically integral and $S_{A,B}$ is smooth of codimension $2$, then $U= X\setminus H$ is an open del Pezzo surface of degree four as in the language of \cite[Definition 2.5]{JS2}. As we work over the base field $\Q$, we have an isomorphism
$$\Br_1(U)/\Br_0(U)\cong H^1(\Gal(\bar \Q/\Q), \Pic(U_{\bar \Q})).$$
The latter group is analysed in detail in \cite{JS2} for an open degree four del Pezzo surface, and we now recall a result from that paper which forms one of the key inputs for our estimates for $N_2(P)$. 

\begin{theorem}[Corollary 3.11 in \cite{JS2}]\label{thm2.2}
Let $S_{A,B}$ be a smooth del Pezzo surface of degree $4$, as defined in (\ref{eqn1}), and $H\subset \P^4$ a hyperplane such that $H\cap S_{A,B}$ is geometrically integral. If $\Br_1(U)/\Br_0(U)\neq 0 $ then the characteristic polynomial $f(\lam,\mu)$ has a rational root or splits off a factor of degree $2$ over the rationals.
\end{theorem}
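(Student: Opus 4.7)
The plan is to prove the contrapositive: assuming $f(\lam,\mu)$ is irreducible of degree $5$ over $\Q$ (so it has neither a rational root nor a quadratic factor), show that $H^1(\Gal(\bar\Q/\Q),\Pic(U_{\bar\Q}))=0$.

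The first step is to identify $\Pic(U_{\bar\Q})$ explicitly as a Galois module. The five roots of $f(\lam,\mu)$ index the five singular members of the pencil of quadrics $\lam A+\mu B$; each is a cone over a smooth quadric in $\P^3$, and the two rulings of that base cut out a pair of conic bundle classes $C_i^+,C_i^-$ on $S_{A,B}$ satisfying $C_i^++C_i^-=-K$. Writing $f_i:=C_i^+$ and passing to $\Pic(U_{\bar\Q})=\Pic(S_{A,B,\bar\Q})/\Z[-K]$, the ten pencil classes become $\pm f_1,\ldots,\pm f_5$, and a short calculation in the standard basis $\{H_0,E_1,\ldots,E_5\}$ shows that the sublattice $M:=\Z\langle f_1,\ldots,f_5\rangle$ has index $2$ in $\Pic(U_{\bar\Q})\cong\Z^5$, with quotient generated by the class of $H_0$ subject to $2H_0=f_1+\cdots+f_5$. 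The Galois group acts through the Weyl group $W(D_5)=(\Z/2\Z)^4\rtimes S_5$: the $S_5$-quotient permutes the roots of $f$, while the sign factors record the swaps within each pair $\{C_i^+,C_i^-\}$.

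Irreducibility of $f$ forces the image of $\Gal(\bar\Q/\Q)$ in $S_5$ to be a transitive subgroup $G\in\{C_5,D_5,F_{20},A_5,S_5\}$. Let $K=\Q(\alpha)$ for a root $\alpha$ of $f$ and let $\chi:\Gal(\bar\Q/K)\to\{\pm1\}$ be the character recording the sign action on $f_1$. Then $M$ is exactly the induced module $\mathrm{Ind}_{\Gal(\bar\Q/K)}^{\Gal(\bar\Q/\Q)}\Z_\chi$, so by Shapiro's lemma
$$H^1\bigl(\Gal(\bar\Q/\Q),M\bigr)\cong H^1\bigl(\Gal(\bar\Q/K),\Z_\chi\bigr),$$
which vanishes when $\chi$ is trivial and is at most $\Z/2\Z$ otherwise (by inflation--restriction along the quadratic extension $L/K$ cut out by $\chi$, using $H^1(\Z/2\Z,\Z^-)=\Z/2\Z$ and $H^1(\Gal(\bar\Q/L),\Z)=0$).

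The remaining task is to transfer this near-vanishing from $M$ to $\Pic(U_{\bar\Q})$ via the long exact sequence associated with $0\to M\to\Pic(U_{\bar\Q})\to\Z/2\Z\to 0$. The hard part, and the real content of the theorem, is to check that any $\Z/2\Z$ in $H^1(\Gal(\bar\Q/\Q),M)$ actually lies in the image of the connecting map $\delta\colon H^0(\Gal(\bar\Q/\Q),\Z/2\Z)\to H^1(\Gal(\bar\Q/\Q),M)$ coming from the explicit extension class $[H_0]\in(M/2M)^{\Gal(\bar\Q/\Q)}$, and simultaneously that the induced map $H^1(\Gal(\bar\Q/\Q),\Pic(U_{\bar\Q}))\to H^1(\Gal(\bar\Q/\Q),\Z/2\Z)$ is zero. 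As in [JS2], this is handled by an explicit case analysis over the five transitive images $G\subseteq S_5$ and their compatible sign lifts into $W(D_5)$; the key observation is that it is precisely a fixed index (a rational root of $f$) or a Galois-stable pair of indices (a quadratic factor) which breaks the transitivity relied upon above and allows a non-trivial class to survive.
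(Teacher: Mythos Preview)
This theorem is not proved in the present paper at all: it is quoted verbatim as Corollary~3.11 of \cite{JS2} and used as a black box in the proof of Theorem~\ref{prop1}. So there is no ``paper's own proof'' to compare against here; any comparison would have to be with \cite{JS2}.

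As a sketch toward such a proof, your outline has the right shape---identify $\Pic(U_{\bar\Q})$ via the ten pencil classes, recognise the index-$2$ sublattice $M$ as an induced module, apply Shapiro, and then run the long exact sequence for $0\to M\to\Pic(U_{\bar\Q})\to\Z/2\Z\to 0$---and your contrapositive reduction is sound, since a separable binary quintic with no linear and no quadratic factor over~$\Q$ must be irreducible. However, the argument as written is not a proof but a plan: at the decisive point, namely showing that the potential $\Z/2\Z$ in $H^1(\Gal(\bar\Q/\Q),M)$ is absorbed by the connecting map and that nothing leaks through to $H^1(\Gal(\bar\Q/\Q),\Z/2\Z)$, you explicitly defer to ``an explicit case analysis \ldots\ as in [JS2]''. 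That is circular---you are invoking the very source whose corollary you are trying to establish. If you want this to stand on its own, you must actually carry out that analysis: list the subgroups of $W(D_5)$ whose image in $S_5$ is transitive, and for each verify directly (or via a uniform cohomological argument) that $H^1$ of the quotient lattice vanishes. Your closing sentence about fixed indices and stable pairs ``allowing a non-trivial class to survive'' is a heuristic for the converse direction, not an argument for the direction you need.
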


For the first part of our estimates to follow, we use the following result due to Bombieri and Pila \cite{BomPi}.

\begin{theorem}[Theorem 4 in \cite{BomPi}]\label{thm2.3}
Let $G(x,y)\in \Z[x,y]$ be an absolutely irreducible polynomial of absolute degree $d$. Let $N\geq \exp(d^6) $ be an integer. Then one has the bound
$$\sharp\{x,y\in( [0,N]\cap \Z)^2: G(x,y)=0\}\leq N^{1/d}\exp\left(12\sqrt{d\log N\log\log N}\right) .$$
\end{theorem}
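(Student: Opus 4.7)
The plan is to apply the determinant method of Bombieri and Pila. Let $S\subset ([0,N]\cap\Z)^2$ denote the set of integer points on the curve $G(x,y)=0$. First I would partition the square $[0,N]^2$ into $O((N/K)^2)$ translated sub-squares of side length $K$, where the parameter $K$ is to be optimized later.

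Within each sub-square $B$, the aim is to produce an auxiliary polynomial $F(x,y)\in\Z[x,y]$ of degree at most $D$, coprime to $G$, that vanishes on every point of $S\cap B$. Once $F$ exists, absolute irreducibility of $G$ together with B\'ezout's theorem forces $|S\cap B|\le dD$, and summing over sub-squares then bounds $|S|$ by $O((N/K)^2\, dD)$.

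Such an $F$ is constructed by a determinant and pigeonhole argument. After translating so that $B$ is centred at the origin, the coordinates of points of $S\cap B$ lie in $[-K/2,K/2]\cap\Z$. With $r=\binom{D+2}{2}$, I would form the $r\times r$ matrix $M$ whose rows are indexed by $r$ chosen points of $S\cap B$ and whose columns correspond to the monomials $x^ay^b$ of total degree $\le D$. Hadamard's inequality yields an explicit estimate of the shape $|\det M|\le (r^{1/2}K^{D})^{r}$, and since $\det M\in\Z$, choosing $K$ small enough in terms of $D$ forces $\det M=0$. This produces a nontrivial linear relation among the rows, i.e.\ a polynomial $F$ of degree $\le D$ vanishing on those $r$ points; a maximality (or iteration) argument then extends $F$ to vanish on all of $S\cap B$, and absolute irreducibility of $G$ lets one arrange $F$ to be coprime to $G$.

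Finally, one optimizes the two parameters by taking $D\asymp\sqrt{d\log N/\log\log N}$ and $K\asymp N^{1-2/D}$, which balances the two contributions and yields the claimed bound $N^{1/d}\exp\!\bigl(12\sqrt{d\log N\log\log N}\bigr)$. The hypothesis $N\ge\exp(d^6)$ is used to place $K$ and $D$ in the regime where the determinant argument is effective (in particular so that $D\ge d$ is feasible and B\'ezout applies after arranging coprimality). The principal technical obstacle is the careful tracking of constants in the Hadamard estimate together with the simultaneous optimization of $D$ and $K$; absolute irreducibility of $G$ plays an essential role both for the coprimality step producing $F$ and for the application of B\'ezout to bound intersections by $dD$.
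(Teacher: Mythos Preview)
The paper does not prove this statement at all; it is quoted verbatim as Theorem~4 of Bombieri--Pila \cite{BomPi} and used as a black box. So there is no ``paper's proof'' to compare against, and your proposal must stand on its own.

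Your outline has the right architecture (partition, auxiliary polynomial via a vanishing determinant, then B\'ezout), but the step that actually drives the argument is missing. The Hadamard estimate you write, $|\det M|\le (r^{1/2}K^{D})^{r}$, can never force $\det M=0$: as soon as the box is wide enough to contain two distinct integer points one has $K\ge 1$, so the right-hand side is at least $r^{r/2}\ge 1$. The genuine Bombieri--Pila bound does not come from Hadamard. One first decomposes the curve into $O(d^{2})$ arcs on each of which it is a graph $y=\varphi(x)$, and then bounds $\det M=\det\bigl(x_i^{a}\varphi(x_i)^{b}\bigr)$ by a divided--difference / mean--value argument that extracts a Vandermonde--type factor and replaces the crude $K^{D}$ by a product of powers of the interval length weighted by derivatives of $\varphi$. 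It is precisely in controlling those derivatives for an \emph{algebraic} $\varphi$ of degree $d$ that the exponent $1/d$ is produced. Two further symptoms that this mechanism is absent from your sketch: (i)~you sum over all $(N/K)^{2}$ sub-squares, whereas the curve meets only $O(d\,N/K)$ of them, and (ii)~your proposed parameters $K\asymp N^{1-2/D}$, $D\asymp\sqrt{d\log N/\log\log N}$, plugged into $(N/K)^{2}dD$, give roughly $\exp\!\bigl((4/\sqrt{d})\sqrt{\log N\log\log N}\bigr)$, which contains no $N^{1/d}$ main term at all. The determinant estimate via divided differences is the essential idea you need to supply.
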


Note that this result is more precise than what we need, as in our application the degree $d$ is fixed.
\section{Irreducibility results and a resolvent construction}

In our proof of Theorem \ref{prop1}, we need the following result on geometric irreducibility of fibers. It can be found in online lecture notes by B. Osserman \cite[Proposition 2.3]{Os}. For convenience of the reader, we give our own proof here.

\begin{proposition}\label{Osserman}
Let $A$ be a Noetherian ring, set $X:= \Spec \, A$ and let ${g\in A[x_1,\ldots, x_n]}$ be a polynomial of total degree $d$. Then 
there is a Zariski closed subset $Z\subset X$ with the following property.\par
A prime ideal $\grp \in X$ is contained in $Z$ if and only if there exists an algebraically closed field extension $k$ of the residue field $\kap(\grp)$ such that $g$ is reducible or has degree strictly less than $d$ when considered as a polynomial over $k$. 
\end{proposition}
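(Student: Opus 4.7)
The plan is to separate the two conditions in the proposition — "degree drops" and "reducibility" — each giving rise to a Zariski closed subset of $X$, whose union will be the required set $Z$. The degree condition is immediate: let $I_1 \subset A$ be the ideal generated by the coefficients of those monomials in $g$ whose total degree equals $d$, and set $Z_1 := V(I_1) \subset X$. Over any field extension of $\kap(\grp)$, the polynomial $g$ has degree strictly less than $d$ precisely when all these top coefficients vanish modulo $\grp$; so $Z_1$ captures the "degree $< d$" alternative, and the condition is insensitive to the choice of algebraically closed extension since degree is preserved under base change of fields.

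For the reducibility condition I would use a resolvent-style parametrisation of factorisations via projective multiplication. Let $V_k$ be the free $A$-module of polynomials in $x_1, \ldots, x_n$ of total degree at most $k$. For each $e$ with $1 \le e \le \lfloor d/2 \rfloor$, the bilinear multiplication $(g_1, g_2) \mapsto g_1 g_2$ defines a morphism of projective $A$-schemes
$$
\mu_e \colon \P(V_e) \times_A \P(V_{d-e}) \longrightarrow \P(V_d),
$$
given on bihomogeneous coordinates by the sections $(c_\alpha, d_\bet) \mapsto \sum_{\alpha + \bet = \gam} c_\alpha d_\bet$ of $\calO(1,1)$. These sections have no common zero on any geometric fibre, because polynomial rings over fields are integral domains, so $\mu_e$ is everywhere defined. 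Since the source is proper over $\Spec A$, the morphism $\mu_e$ is proper and its set-theoretic image $R_e \subset \P(V_d)_A$ is a closed subset. On the open complement $X \setminus Z_1$ the polynomial $g$ has degree exactly $d$ in every fibre, so the coefficients of its degree-$d$ monomials generate the unit ideal locally and therefore induce a morphism $\sig \colon X \setminus Z_1 \to \P(V_d)_A$; setting $Z_e := \sig^{-1}(R_e)$ produces a closed subset of $X \setminus Z_1$.

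I would then take $Z := Z_1 \cup \bigcup_{e=1}^{\lfloor d/2 \rfloor} Z_e$. This is Zariski closed in $X$, since its complement $(X \setminus Z_1) \setminus \bigcup_e Z_e$ is open in the open subscheme $X \setminus Z_1$, hence open in $X$. The required equivalence is then a routine unwinding: if $\grp \notin Z_1$ and $g \bmod \grp$ factors as $g_1 g_2$ with $1 \le \deg g_i$ over some algebraically closed extension $k$ of $\kap(\grp)$, the pair $([g_1], [g_2])$ is a $k$-point of $\P(V_e) \times \P(V_{d-e})$ whose image is $\sig(\grp)$, so $\sig(\grp) \in R_e$ and $\grp \in Z_e$; conversely, the properness of $\mu_e$ guarantees that every point of $R_e$ lying over $\grp$ is hit by a geometric point of the source, which translates back into a factorisation of $g \bmod \grp$ over an algebraic closure of $\kap(\grp)$. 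The main obstacle I anticipate is exactly this last converse: ensuring that $R_e$, as a topological subset of $\P(V_d)_A$, really does detect the existence of a factorisation over \emph{some} algebraically closed extension (not merely generically over $\grp$), which will rest on the fact that for a proper morphism the set-theoretic image coincides with the closed scheme-theoretic image, together with the standard lifting of geometric points across proper morphisms onto their image.
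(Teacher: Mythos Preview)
Your approach is correct and genuinely different from the paper's. The paper first homogenises $g$ to $G$, observes that the two conditions are jointly equivalent to the hypersurface $\{G=0\}\subset\P^n$ having a geometrically reducible or non-reduced fibre, and then invokes \cite[Th\'eor\`eme 9.7.7]{EGAIV} to see that this locus is constructible. Closedness is then obtained by a separate specialisation argument: reducing to a one-dimensional Noetherian local domain, passing via Krull--Akizuki to a discrete valuation ring in a finite extension of the fraction field, and using Gauss's lemma to transport a factorisation over the fraction field to one over the residue field. Your argument instead exhibits $Z$ directly as $Z_1$ together with the pullbacks under the section $\sigma$ of the closed images of the proper multiplication morphisms $\mu_e$, so closedness is immediate.

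Your route is more explicit and self-contained, avoiding both the EGA constructibility theorem and the DVR/Gauss--lemma detour; incidentally it also shows that the Noetherian hypothesis on $A$ is not needed. The paper's route, by contrast, packages everything into a single fibrewise geometric-integrality statement and then appeals to general structure results, a pattern that adapts more readily to other fibrewise properties. The obstacle you flag at the end is not serious: since $R_e$ is the honest set-theoretic image of $\mu_e$, any $\sigma(\grp)\in R_e$ has a scheme-theoretic preimage $x$, and the fibre $\mu_e^{-1}(\sigma(\grp))$ is of finite type over $\kappa(\grp)$, so passing to $\overline{\kappa(x)}$ (or to a closed point of the fibre) already yields the required factorisation over an algebraically closed extension of $\kappa(\grp)$.
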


\begin{remark}
Note that the last statement in Proposition \ref{Osserman} is equivalent to saying that for every algebraically closed extension field $k$ of $\kap(\grp)$ the polynomial $g$ is reducible or has degree less than $d$ when considered as a polynomial over $k$. (See for example \cite[Exercise II.3.15]{Hartshorne}.)
\end{remark}

\begin{proof}
{\em Step 1:} 
First, we consider the homogenization $G(x_0,\ldots, x_n)$ of the polynomial $g(x_1,\ldots, x_n)$, defined by
$$G(x_0,\ldots, x_n):= x_0^d g\left(\frac{x_1}{x_0},\ldots, \frac{x_n}{x_0}\right).$$
Then $G$ defines a hypersurface $V\subset \P_A^n$. Let $\grp\in X$ and $k$ some extension field of $\kap(\grp)$. We note that the polynomial $g$ is reducible or of degree strictly smaller than $d$ over $k$ if and only if $G$ is reducible over $k$. Moreover, $G$ is reducible over $k$ if and only if the reduction of $V$ over $k$ is reducible or not reduced over~$k$.\par
By \cite[Th\'eor\`eme 9.7.7]{EGAIV}, the set of prime ideals $\grp$ such that $G$ is reducible over $k$ is constructible. In order to show that this set is even Zariski closed, it is hence sufficient to show that it is closed under specialization. We will prove the following claim.\vspace{0.2cm}\\
\noindent
{\bf Claim 1:} Let $\grp\subset \grP$ be prime ideals in $A$. If the image of $g$ in $Q(A/\grp)[x_1,\ldots, x_n]$ is of degree strictly smaller than $d$ or becomes reducible over a finite extension field of $Q(A/\grp)$, then the image of $g$ in $Q(A/\grP)[x_1,\ldots, x_n]$ is reducible or of degree smaller than $d$ after taking a finite extension field. (Here, we write $Q(B)$ for the quotient field of an integral domain $B$.)\vspace{0.2cm}\\
Since we assumed $A$ to be Noetherian, it is enough to prove the claim for the case that there is no other prime ideal between $\grp$ and $\grP$.\vspace{0.2cm}\\
{\em Step 2:} If the reduction of $g$ modulo $\grp$ is of degree strictly smaller than $d$ then clearly the same is true for the reduction of $g$ modulo $\grP$. We hence assume that the image of $g$ in $Q(A/\grp)[x_1,\ldots , x_n]$ is reducible over $\overline{Q(A/\grp)}$.\vspace{0.2cm}\\
{\em Step 3:} 
In replacing the ring $A$ by $A/\grp$, we may assume w.l.o.g. that $A$ is an integral domain and $\grp=(0)$ and that $\grP$ is a minimal prime ideal. We now consider the localization $A_{\grP}$, which is a Noetherian, one-dimensional, local integral domain with maximal ideal $\grP A_{\grP}$. Moreover, $A_\grP/\grP A_\grP= Q(A/\grP)$. It is hence sufficient to establish the following claim.\vspace{0.2cm}\\
\noindent
{\bf Claim 2:} Let $A$ be a one-dimensional, Noetherian, local integral domain with maximal ideal $\grm$ and assume that $g$, considered as a polynomial in $Q(A)[x_1,\ldots, x_n]$, is reducible over a finite extension field. Then the reduction of $g$ in $(A/\grm)[x_1,\ldots, x_n]$ is of degree strictly smaller than $d$ or is reducible, possibly over a finite extension field of $A/\grm$.\vspace{0.2cm}\\
{\em Step 4:} Assume that $Q(A)\subset L$ is a finite extension field, such that $g$ factors over $L$. Let $R$ be the integral closure of $A$ in $L$. By the Going-up Theorem \cite[Theorem 9.4 i)]{Matsumura}, there is a prime ideal $\grn \subset R$ of height one lying above~$\grm$. We note that $R$ is Noetherian by the theorem of Krull-Akizuki \cite[Theorem 11.7]{Matsumura}. Hence, $R_\grn$ is a Noetherian, one-dimensional, local domain, which is integrally closed in its fraction field. By \cite[Theorem 11.2]{Matsumura}, $R_\grn$ is a discrete valuation ring. We have assumed that $g$ factors over $Q(R_\grn)=Q(R)=L$. Since $R_\grn$ is a discrete valuation ring, it is a unique factorization domain and Gauss's lemma implies that $g$ already factors over $R_\grn$.\vspace{0.2cm}\\
{\em Step 5:} As $g$ is reducible in the polynomial ring $R_\grn[x_1,\ldots, x_n]$, its reduction in $R_\grn/\grn R_\grn[x_1,\ldots, x_n]$ is also reducible or of strictly smaller degree. We conclude the proof in noting that $R_{\grn}/\grn R_\grn = Q(R/\grn)$ and that $Q(A/\grm) \subset Q(R/\grn)$ is a finite extension field. 
\end{proof}

In the following, we assume that both $A$ and $B$ are symmetric matrices and write $(b_{ij})_{1\leq i\leq j\leq 5}$ for the coefficients determining the matrix $B$. Moreover, we set $\bfb':=(b_{ij})_{1\leq i\leq j\leq 5, (i,j)\neq (1,1)}$ for the vector consisting of all entries except for the first one $b_{11}$.

\begin{lemma}\label{lemZ0}
Let $A=\Id_{5}$ be the identity matrix. Then there is a Zariski closed subset $Z_0\subsetneq \A_\Q^{14}$ with the following property. If $\bfb'\in \A^{14}(\Qbar)\setminus Z_{0}(\Qbar)$, then $f(\lam,1;\Id_{5},B)$ is absolutely irreducible of degree five as a polynomial in the variables $\lam$ and $b_{11}$.
\end{lemma}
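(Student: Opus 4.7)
The plan is to exploit the fact that $g(\lam, b_{11}) := f(\lam, 1; \Id_5, B) = \det(\lam\Id_5 + B)$ depends \emph{linearly} on $b_{11}$. Since $b_{11}$ appears only in the $(1,1)$-entry of $\lam\Id_5+B$, cofactor expansion along the first row gives the decomposition
$$g(\lam, b_{11}) = b_{11}\,P(\lam, \bfb') + R(\lam, \bfb'),$$
where $P(\lam,\bfb')=\det(\lam\Id_4+B')$ with $B'$ the lower-right $4\times 4$ submatrix of $B$, and $R(\lam,\bfb')=g(\lam,0)=\det(\lam\Id_5+B_0)$ with $B_0$ obtained from $B$ by setting $b_{11}=0$. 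Because $R$ has leading term $\lam^5$, the total degree of $g$ in $(\lam,b_{11})$ is exactly $5$ for every specialization of $\bfb'$.

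Next I would reduce absolute irreducibility of $g$ to the coprimality of $P$ and $R$. Because $\deg_{b_{11}} g = 1$, any non-trivial factorization $g = g_1 g_2$ over an algebraically closed field $k\supseteq\Qbar$ must have exactly one of the two factors depending on $b_{11}$, so we may write $g_1(\lam)\in k[\lam]$ with $\deg_\lam g_1\ge 1$ and $g_2 = b_{11}\tilde a(\lam)+\tilde b(\lam)$. Comparing coefficients of $b_{11}^0$ and $b_{11}^1$ shows that $g_1$ divides both $R$ and $P$; the converse is immediate. Hence, for each $\bfb'\in\A^{14}(\Qbar)$, the polynomial $g$ is absolutely irreducible of degree $5$ iff $\gcd_\lam(P,R)=1$ in $\Qbar[\lam]$, iff $\mathrm{Res}_\lam(P,R)\neq 0$. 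Since this resultant is a polynomial in $\bfb'$, its vanishing locus defines the desired Zariski-closed $Z_0\subseteq\A^{14}_\Q$. Alternatively, one may apply Proposition~\ref{Osserman} to $g\in\Q[\bfb'][\lam,b_{11}]$ to obtain the same $Z_0$ intrinsically, without computing the resultant.

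The only substantive step is to verify that $Z_0\subsetneq\A^{14}$, i.e.\ that the resultant is not identically zero. I would exhibit a single $\bfb'$ where $\gcd(P,R)=1$, taking the symmetric tridiagonal choice $b_{i,i+1}=b_{i+1,i}=1$ for $i=1,2,3,4$ and all remaining coordinates (apart from $b_{11}$) equal to zero. The classical three-term recursion $D_n=\lam D_{n-1}-D_{n-2}$, $D_0=1$, $D_1=\lam$, for tridiagonal determinants then gives
$$P(\lam)=\lam^4-3\lam^2+1,\qquad R(\lam)=\lam^5-4\lam^3+3\lam=\lam(\lam^2-1)(\lam^2-3).$$
Evaluating $P$ at the roots $0,\pm1,\pm\sqrt{3}$ of $R$ yields $1,-1,-1,1,1$ respectively, all non-zero, so $\gcd(P,R)=1$ and this $\bfb'$ lies outside $Z_0$. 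Apart from producing this one explicit example, which is the only point that requires real work, the argument is purely formal once the linearity in $b_{11}$ is exploited.
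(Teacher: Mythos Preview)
Your proof is correct and takes a genuinely different, more elementary route than the paper's.

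The paper's argument is in two parts: first it invokes Proposition~\ref{Osserman} (closedness of the reducibility locus in families) to obtain $Z_0$ abstractly, and then it verifies $Z_0\subsetneq\A^{14}_\Q$ by a {\tt magma} computation showing that a certain specialization defines a projective curve of genus~$0$ with several smooth rational points, hence geometrically irreducible. Your approach instead exploits the special feature that $\det(\lam\Id_5+B)$ is \emph{linear} in $b_{11}$. Writing $g=b_{11}P(\lam)+R(\lam)$, you observe that any nontrivial factor of $g$ over an algebraically closed field must be a common factor of $P$ and $R$ in $\lam$, so absolute irreducibility is equivalent to $\mathrm{Res}_\lam(P,R)\neq 0$. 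This gives an \emph{explicit} defining equation for $Z_0$ and bypasses Proposition~\ref{Osserman} entirely. Your tridiagonal example, verified via the Chebyshev-type recursion $D_n=\lam D_{n-1}-D_{n-2}$, is hand-checkable and avoids any computer algebra.

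What each approach buys: yours is shorter, self-contained, and yields a concrete description of $Z_0$ as a resultant hypersurface. The paper's approach, while heavier, does not depend on the accident that $b_{11}$ appears linearly; indeed the same template (Proposition~\ref{Osserman} plus one explicit specialization) is reused later for the resolvent $\Phi(z;A,B)$ in Proposition~\ref{irred6}, where the dependence on $b_{11}$ is no longer linear and your shortcut would not apply directly.
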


\begin{proof}
We apply Proposition \ref{Osserman} to the ring $A=\Q[b_{12},\ldots, b_{55}]$ and the polynomial $f(\lam,1;\Id_5,B)\in A[b_{11},\lam]$. Let $Z_0\subset \A_\Q^{14}$ be the Zariski closed subset, given by Proposition \ref{Osserman}. It remains to show that $Z_0$ is not equal to the whole $14$-dimensional affine space. Calculations using {\tt magma} show that the polynomial
\begin{equation*}
g(\lam,b_{11},b_{12}):=\det \left(\lam I_5 + \begin{pmatrix}
    b_{11} & b_{12} &0 &0 &0\\ 
    b_{12} & b_{12}&b_{12}&0&0\\
    0&b_{12}&b_{12}&b_{12}&0\\
    0&0&b_{12}&b_{12}&b_{12}\\
    0&0&0&b_{12}&b_{12}
    \end{pmatrix} \right)
\end{equation*}
defines an irreducible curve of genus 0 in the projective plane. Moreover, up to height 100 we find eight regular $\Q$-rational points. Hence, the curve is geometrically irreducible and the polynomial $g(\lam,b_{11},1)$ is absolutely irreducible and of degree five. We conclude that $Z_0\subsetneq \A_\Q^{14}$ as desired.
\end{proof}

Since $f(\lam,1;\Id_5,B)$ is homogeneous of degree five, we obtain the following corollary. 

\begin{corollary}\label{irred2}
The polynomial $f(\lam,1;\Id_5,B)$ is absolutely irreducible in the $16$ variables $\lam$ and $(b_{ij})_{1\leq i\leq j\leq 5}$.\hspace{6cm}$\quad\quad\quad\quad\qed$
\end{corollary}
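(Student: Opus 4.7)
The plan is to promote the generic two-variable irreducibility statement of Lemma \ref{lemZ0} to a global statement in all $16$ variables via a homogeneity-and-specialization argument. The first observation is that $f(\lam,1;\Id_5,B) = \det(\lam\Id_5 + B)$ is homogeneous of total degree $5$ in the $16$ variables $\lam, b_{11}, \ldots, b_{55}$, since scaling each of them by a common factor $t$ multiplies the matrix $\lam\Id_5 + B$ by $t$ and its determinant by $t^5$.

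Suppose for contradiction that $f = PQ$ is a nontrivial factorization over $\Qbar$ in $\Qbar[\lam, b_{11}, \ldots, b_{55}]$. Since $f$ is homogeneous, a standard comparison of top and bottom degree components forces $P$ and $Q$ to be homogeneous themselves, of positive degrees $d_1, d_2$ with $d_1 + d_2 = 5$. The monomial $\lam^5$ occurs in $f$ with coefficient $1$; comparing the $\lam^5$-coefficients on the two sides of $f = PQ$ forces $\deg_\lam P = d_1$ and $\deg_\lam Q = d_2$, and the corresponding $\lam$-leading coefficients must then be of total degree $0$ in the $b_{ij}$'s. Hence these leading coefficients are in fact nonzero scalars $c_P, c_Q \in \Qbar^\times$ satisfying $c_P c_Q = 1$.

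Next, I pick any $\bfb'_0 \in \A^{14}(\Qbar) \setminus Z_0(\Qbar)$ --- such a point exists by Lemma \ref{lemZ0} --- and substitute $\bfb' = \bfb'_0$ into the identity $f = PQ$. The left-hand side becomes the absolutely irreducible polynomial provided by Lemma \ref{lemZ0}, while the right-hand side remains a product of two polynomials in $\Qbar[\lam, b_{11}]$ of $\lam$-degrees $d_1, d_2 > 0$: the $\lam$-leading scalars $c_P, c_Q$ are untouched by the substitution, so neither factor collapses to a constant. This contradicts absolute irreducibility and proves the corollary.

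The only step a naive specialization approach would risk losing is the non-degeneracy of the factorization under restriction of $\bfb'$ --- a risk eliminated entirely by the combination of homogeneity of $f$ with the fact that its $\lam^5$-coefficient is a unit. Together these pin down the $\lam$-leading coefficients of $P$ and $Q$ as nonzero constants in $\Qbar^\times$ that survive any specialization in the $b_{ij}$'s.
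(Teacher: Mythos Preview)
Your proof is correct and follows exactly the approach the paper intends: the one-line justification ``since $f(\lam,1;\Id_5,B)$ is homogeneous of degree five'' preceding the corollary is precisely the homogeneity-and-specialization argument you have spelled out in detail. Your observation that homogeneity together with the unit coefficient on $\lam^5$ forces the $\lam$-leading coefficients of any hypothetical factors to be nonzero scalars---and hence to survive specialization of $\bfb'$---is the content the paper leaves implicit.
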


Now we show that similar statements hold when we replace the identity matrix $I_5$ by some symmetric invertible matrix $A\in M_{5\times 5}(\Q)$. 

\begin{lemma}\label{lem5} 
Let $A\in M_{5\times 5}(\Q)$ be a symmetric matrix and assume that $\det(A)\neq 0$. Then the polynomial $f(\lam,1;A,B)$ is absolutely irreducible in the $16$ variables $\lam$ and $(b_{ij})_{1\leq i\leq j\leq 5}$. 
\end{lemma}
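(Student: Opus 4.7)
The plan is to reduce the statement to the already-established case $A=I_5$ (Corollary \ref{irred2}) by means of a linear change of the variables $b_{ij}$. Since we only need \emph{absolute} irreducibility, we are free to work over $\Qbar$, where every non-degenerate symmetric bilinear form is equivalent to the standard one.

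First, because $A$ is symmetric with $\det(A)\neq 0$ and $\mathrm{char}(\Qbar)\neq 2$, there exists an invertible matrix $P\in GL_5(\Qbar)$ with $P^{t}AP=I_5$, equivalently $A=(P^{t})^{-1}P^{-1}$. Define a new set of symmetric matrix variables by $B':=P^{t}BP$; since $P$ is invertible over $\Qbar$, the assignment $B\mapsto B'$ is an invertible $\Qbar$-linear automorphism of the $15$-dimensional space of $5\times 5$ symmetric matrices, and in particular the coordinates $(b'_{ij})_{1\le i\le j\le 5}$ are $\Qbar$-linearly related to the coordinates $(b_{ij})_{1\le i\le j\le 5}$ by a nonsingular transformation.

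Next, I would compute
\begin{equation*}
\lam A+B \;=\; (P^{t})^{-1}\bigl(\lam I_5+P^{t}BP\bigr)P^{-1}\;=\;(P^{t})^{-1}\bigl(\lam I_5+B'\bigr)P^{-1},
\end{equation*}
so that
\begin{equation*}
f(\lam,1;A,B)\;=\;\det(\lam A+B)\;=\;\det(P)^{-2}\,\det(\lam I_5+B')\;=\;\det(P)^{-2}\,f(\lam,1;I_5,B').
\end{equation*}
By Corollary \ref{irred2}, the right-hand side is, up to the nonzero scalar $\det(P)^{-2}$, an absolutely irreducible polynomial in $\lam$ and the $15$ variables $(b'_{ij})_{1\le i\le j\le 5}$. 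Absolute irreducibility is preserved by invertible linear substitutions of the variables, so substituting $b'_{ij}=\sum_{k\le l}c_{ij,kl}\,b_{kl}$ (the linear relations coming from $B'=P^{t}BP$) shows that $f(\lam,1;A,B)$ is absolutely irreducible in $\lam$ and $(b_{ij})_{1\le i\le j\le 5}$, as claimed.

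The only subtle point is the legitimacy of passing to $\Qbar$: the matrix $P$ in general does not live over $\Q$, but absolute irreducibility is preserved under any field extension, so proving the statement after base change to $\Qbar$ is equivalent to the original claim. I do not expect any further obstacle; the argument is essentially functoriality of the characteristic polynomial under congruence combined with the base case $A=I_5$.
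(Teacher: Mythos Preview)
Your proof is correct and follows essentially the same approach as the paper: diagonalize $A$ over $\Qbar$ via $P^tAP=I_5$, rewrite $f(\lam,1;A,B)=\det(P)^{-2}f(\lam,1;I_5,P^tBP)$, invoke Corollary~\ref{irred2}, and note that the invertible linear substitution $B\mapsto P^tBP$ preserves absolute irreducibility. The paper's argument is identical up to notation (it calls the diagonalizing matrix $T$ rather than $P$).
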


\begin{proof}
As we ask for absolute irreducibility, we may work over $\bar{\Q}$. Let $T$ be an invertible matrix such that $T^tAT=\Id_5$. Then we have
\begin{equation}\label{eqn20}
\begin{split}
f(\lam,1;A,B)&= \det (\lam A+B)= \det (\lam (T^{-1})^t\Id_5T^{-1}+B)\\ &= \det(T^{-1})^2 \det (\lam I_5 + T^tBT).
\end{split}
\end{equation}
By Corollary \ref{irred2}, the polynomial $\det(\lam I_5 + B)$ is absolutely irreducible, considered as a polynomial in $16$ variables. Hence, the polynomial on the right hand side of (\ref{eqn20}) is absolutely irreducible in $\lam$ and $(b_{ij})_{1\leq i\leq j\leq 5}$, as the linear variable substitution $B \mapsto T^tBT$ does not affect absolute irreducibility.  
\end{proof}

For the case $A=\Id_5$, Lemma \ref{lemZ0} shows that the polynomial $f(\lam,1;\Id_{5},B)$ is absolutely irreducible in $\lam$ and $b_{11}$, for almost all $\bfb'$. We now provide a comparable statement for a general matrix $A$. For the proof, we use Lemma \ref{lem5} together with a Bertini-type theorem for absolute irreducibility to deduce that there is a curve in the corresponding family that is absolutely irreducible. This will suffice for another application of Proposition \ref{Osserman}.

\begin{proposition}\label{irred5}
Let $A\in M_{5\times 5} (\Q)$ be a symmetric matrix with $\det (A)\neq 0$. Then there exist a tuple $\bfv':=(v_{ij})_{1\leq i\leq j\leq 5, (i,j)\neq (1,1)}\in \Z^{14}$ and a Zariski closed subset $Z_A\subsetneq \A_\Q^{14}$ with the following property.\par
%Write $\bfc':=(c_{ij})_{1\leq i\leq j\leq 5, (i,j)\neq (1,1)}$ and $\bfv'$ for the vector $\bfv':=(v_{ij})_{1\leq i\leq j\leq 5, (i,j)\neq (1,1)}$. 
Define the polynomial 
\begin{equation*}
h_A(\lam, b_{11},\bfc'):= \det \left(\lam A + \begin{pmatrix}
    b_{11} & c_{12}+v_{12}b_{11} &\ldots &c_{15}+v_{15}b_{11}\\ 
    c_{12}+v_{12}b_{11} & c_{22}+v_{22}b_{11}&\ldots&c_{25}+v_{25}b_{11}\\
    \vdots&\ddots &&\vdots\\
    c_{15}+v_{15}b_{11}&c_{25}+v_{25}b_{11} &\ldots &c_{55}+v_{55}b_{11}
    \end{pmatrix} \right).
\end{equation*}
Then $h_A(\lam, b_{11},\bfc')$ is absolutely irreducible as a polynomial in $\lam$ and $b_{11}$, for each $\bfc':=(c_{ij})_{1\leq i\leq j\leq 5, (i,j)\neq (1,1)}\in \A^{14}(\bar \Q)\setminus Z_A(\bar \Q)$. 
\end{proposition}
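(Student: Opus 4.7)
The plan is to apply Proposition \ref{Osserman} to the Noetherian ring $R := \Q[\bfc']$ and the polynomial $h_A \in R[\lambda, b_{11}]$; this directly produces a Zariski closed subset $Z_A \subseteq \Spec R = \A^{14}_\Q$ outside of which the specialisation of $h_A$ is absolutely irreducible of total degree $5$ in $\lambda, b_{11}$. The content of the proposition is then to show that for some integer choice of $\bfv'$, $Z_A$ is a \emph{proper} subset of $\A^{14}_\Q$. Applied at the generic point of $\Spec R$, Proposition \ref{Osserman} reduces this in turn to exhibiting a single $\bfc'_0 \in \A^{14}(\bar\Q)$ at which $h_A(\lambda, b_{11}, \bfc'_0)$ is absolutely irreducible of total degree $5$.

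Set $F(\lambda, (b_{ij})) := f(\lambda, 1; A, B)$ and $V := \{F = 0\} \subseteq \A^{16}_\Q$. By Lemma \ref{lem5}, $V$ is absolutely irreducible of dimension $15$. Geometrically, the curve $\{h_A(\lambda, b_{11}, \bfc'_0) = 0\} \subseteq \A^2_{\lambda, b_{11}}$ is, up to a linear change of variables, the fibre over $\bfc'_0$ of the linear projection $\pi_{\bfv'}\colon V \to \A^{14}_{\bfc'}$, $(\lambda, (b_{ij})) \mapsto (b_{ij} - v_{ij} b_{11})_{(i,j) \neq (1,1)}$. It therefore suffices to find $\bfv' \in \Z^{14}$ for which some geometric fibre of $\pi_{\bfv'}$ is an absolutely irreducible curve---this is the promised Bertini-type statement.

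I would establish the latter by reducing to the case $A = \Id_5$ via (\ref{eqn20}). Let $T \in M_{5 \times 5}(\bar \Q)$ satisfy $T^t A T = \Id_5$; then $F(\lambda, B) = \det(T^{-1})^2 \, F_{\Id_5}(\lambda, T^t B T)$ where $F_{\Id_5} := f(\lambda, 1; \Id_5, \cdot)$. Under $B \mapsto T^t B T$, the affine line $\{B(\bfc'_0, b_{11})\}_{b_{11}}$ in $B$-space maps to the affine line with direction $T^t V T$, where $V$ is the symmetric matrix with $V_{11} = 1$ and $V_{ij} = v_{ij}$. Provided $(T^t V T)_{11} \neq 0$, this image admits a reparametrisation in $B'$-coordinates in the form $\{B'_0(\bfc'_0) + \tilde{b}_{11} W\}_{\tilde{b}_{11}}$ with $W_{11} = 1$, and $h_A$ becomes---up to a nonzero scalar and a linear change of variables---the slope-$\bfw'$ analogue of the polynomial in Lemma \ref{lemZ0}, where $\bfw' := (W_{ij})_{(i,j) \neq (1,1)}$.

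The main obstacle is that Lemma \ref{lemZ0} covers only the case $\bfw' = 0$, so it must be upgraded to a dense set of slopes. The set of slopes $\bfw' \in \A^{14}_{\bar \Q}$ for which the associated generic fibre is absolutely irreducible is Zariski-open (by a standard semicontinuity argument applied to the family of Zariski-closed sets $Z_{\bfw'}$ produced by Proposition \ref{Osserman}), and is non-empty since it contains $\bfw' = 0$ by Lemma \ref{lemZ0}. Because the rational map $\bfv' \mapsto \bfw'$ is dominant, and because the property ``$Z_A(\bfv') \neq \A^{14}_\Q$'' is defined over $\Q$, the set of acceptable $\bfv'$ is a non-empty Zariski-open subset of $\A^{14}_\Q$, from which we may pick an integer point. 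A final application of Proposition \ref{Osserman} then yields the required $Z_A \subsetneq \A^{14}_\Q$.
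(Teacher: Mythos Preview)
Your argument is correct but follows a genuinely different route from the paper's. The paper does not reduce to $A=\Id_5$ at all: starting from the absolute irreducibility of $f(\lam,1;A,B)$ in all sixteen variables (Lemma~\ref{lem5}), it applies Jouanolou's Bertini theorem \cite[Th\'eor\`eme~6.3]{Jou} fourteen times in succession, each time slicing by a hyperplane $b_{ij}=u_{ij}+v_{ij}b_{11}$ so as to preserve geometric integrality, until only the two variables $\lam,b_{11}$ remain; the resulting integers $(u_{ij})$ serve as the witnessing $\bfc'_0$, and Proposition~\ref{Osserman} then furnishes~$Z_A$. Your approach instead passes to $\Id_5$ via the diagonalisation $T^tAT=\Id_5$ from~(\ref{eqn20}), which converts the slope vector $\bfv'$ into a new slope $\bfw'$ on the $\Id_5$-side, and then argues that (i)~the set of slopes $\bfw'$ for which the generic fibre is absolutely irreducible is Zariski open---this follows by applying Proposition~\ref{Osserman} over $\Q[\bfc'',\bfw']$ and observing that the locus of $\bfw'$ whose fibre in the resulting closed set is all of $\A^{14}$ is itself closed---and (ii)~this open set contains $\bfw'=0$ by Lemma~\ref{lemZ0}, hence is non-empty; pulling back along the dominant rational map $\bfv'\mapsto\bfw'$ and using that the goodness condition on $\bfv'$ is already $\Q$-defined yields an integer point. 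The trade-off: the paper's route is shorter and invokes a single black-box Bertini citation; yours stays within the toolkit already developed (Proposition~\ref{Osserman} and Lemma~\ref{lemZ0}) but requires tracking the affine change of slope and the normalisation $W_{11}=1$, and the ``standard semicontinuity'' step---while routine---deserves the one-line justification above rather than being left implicit.
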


\begin{proof}
By Lemma \ref{lem5}, the polynomial $f(\lam,1;A,B)$ is absolutely irreducible in the $16$ variables $\lam$ and $(b_{ij})_{1\leq i\leq j\leq 5}$. 
Consider the affine variety $X\subset \A_\Q^{16}$, given by $f(\lam,1;A,B)=0$, which is then geometrically integral. Define the projection
\begin{equation*}
\begin{split}
\phi_{12}: X&\rightarrow \A_{\Q}^2\\
(\lam,(b_{ij}))&\mapsto (b_{11},b_{12}),
\end{split}
\end{equation*}
and note that this is a dominant map.
Then an application of \cite[Th\'eor\`eme 6.3.3)\, and 4)]{Jou} shows that the intersection
\begin{equation*}
\begin{split}
X^{(2)}: \quad\quad\quad\quad f(\lam,1;A,B)&=0,\\ u_{12}+v_{12}b_{11}+w_{12}b_{12}&=0
\end{split}
\end{equation*}
is geometrically integral for almost all triples $(u_{12},v_{12},w_{12})\in \A^3$ in the sense that the exceptional set is contained in a Zariski closed subset of $\A^3$. By homogeneity, the same holds after normalizing $w_{12}=-1$, say. Hence, there are integers $u_{12}$ and $v_{12}$ such that $X^{(2)}$ is geometrically integral. We now apply the same argument to 
%the map 
%\begin{equation*}
%\begin{split}
%\phi_{13}: X^{(2)}&\rightarrow \A_{\Q}^2\\
%(\lam,(b_{ij}))&\mapsto (b_{11},b_{13}),
%\end{split}
%\end{equation*}
%and 
to find integers $u_{13}$ and $v_{13}$ such that the intersection $$X^{(3)}=X^{(2)}\cap \{b_{13}=u_{13}+v_{13}b_{11}\}$$ is geometrically integral. We repeat this process in total $14$ times to obtain tuples $(u_{ij})_{1\leq i\leq j\leq 5, (i,j)\neq (1,1)}, (v_{ij})_{1\leq i\leq j\leq 5, (i,j)\neq (1,1)}\in \Z^{14}$ such that the affine curve given by
\begin{equation*}
\det \left(\lam A + \begin{pmatrix}
    b_{11} & u_{12}+v_{12}b_{11} &\ldots &u_{15}+v_{15}b_{11}\\ 
    u_{12}+v_{12}b_{11} & u_{22}+v_{22}b_{11}&\ldots&u_{25}+v_{25}b_{11}\\
    \vdots&\ddots &&\vdots\\
    u_{15}+v_{15}b_{11}&\ldots &\ldots &u_{55}+v_{55}b_{11}
    \end{pmatrix} \right)=0
\end{equation*}
is geometrically integral. (Note that the maps $\phi_{1j}$ are all dominant since the polynomial $f(\lam,1;A,B)$ contains the term $\det (A) \lam^5$ and one can therefore always solve for $\lam$.) Hence, $h_A(\lam, b_{11},\bfu)$ is absolutely irreducible as a polynomial in $\lam$ and $b_{11}$. An application of Proposition \ref{Osserman} provides us with a Zariski closed subset $Z_A\subsetneq \A_\Q^{14}$ with the desired property. 
\end{proof}

The results so far are enough to deal with the case of Brauer classes that can occur due to $f(\lam,\mu;A,B)$ having a rational root in $\lam,\mu$. We now need to provide similar irreducibility results for a resolvent that detects when the polynomial $f(\lam,\mu;A,B)$ splits off a factor of degree two. Recall that we have defined $f(\lam,\mu;A,B)= \det (\lam A+\mu B)$. We can write $f(\lam,\mu)$ in the form
$$f(\lam,\mu)=\sum_{l=0}^5p_l(A,B)\mu^{5-l}\lam^l,$$
with $p_l(A,B)$ a polynomial of degree $l$ in the coefficients of the matrix $A$ and of degree $5-l$ in the coefficients of the matrix $B$. Moreover, $p_0(A,B)=\det(B)$ and $p_5(A,B)=\det(A)$. Assume that $\det(A)\neq 0$ and that $A$ and $B$ are fixed. We rewrite $f(\lam,\mu)$ as 
\begin{equation}\label{eqn22}
f(\lam,\mu)= \det(A) \sum_{l=0}^5 \frac{p_l(A,B)}{\det(A)}\mu^{5-l}\lam^l.
\end{equation}
Over some algebraic closure of $\Q$, we then have 
\begin{equation}\label{eqn23}
f(\lam,\mu)= \det(A) \prod_{i=1}^5 (\lam +\alp_i\mu).
\end{equation}
We now define the resolvent 
$$\Phi(z;A,B):=\det (A)^4 \prod_{1\leq i<j\leq 5}\!\!\! (z+\alp_i+\alp_j).$$

\begin{lemma}
{\rm a)} The resolvent $\Phi(z;A,B)$ has the form
$$\Phi(z;A,B) = \sum_{k=0}^{10} q_k(A,B) z^k,$$
with $q_k(A,B)\in \Z[A,B]$ polynomials in the coefficients of the two matrices $A,B$. \\
{\rm b)} The leading coefficient satisfies $q_{10}(A,B)=\det( A)^4$ and, for $0\leq k\leq 10$, the polynomial $q_k(A,B)$ is homogeneous of degree $10-k$ in the coefficients of~$B$ and homogeneous of degree $10+k$ in the coefficients of $A$.\\
{\rm c)} If $f(\lam,\mu;A,B)$ splits off a factor of degree two over the rationals then $\Phi(z;A,B)$ has a rational root $z\in \Q$. 
\end{lemma}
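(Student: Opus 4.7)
Proof plan. The central idea is a determinantal formula that exhibits $\Phi(z;A,B)$ as the characteristic polynomial of a linear pencil on the $10$-dimensional space $\wedge^2 V$ (where $V=\Q^5$), from which all three parts follow easily. Write $A^{\wedge 2}$ for the exterior square of $A$: in the basis $\{e_i\wedge e_j\}_{i<j}$ this is the $10\times 10$ matrix whose entries are the $2\times 2$ minors of $A$. Write $M(A,B)$ for the restriction to $\wedge^2 V$ of the operator $A\otimes B + B\otimes A$ on $V\otimes V$; its entries are the mixed $2\times 2$ minors $b_{mk}a_{pl} + a_{mk}b_{pl} - b_{ml}a_{pk} - a_{ml}b_{pk}$, polynomials with integer coefficients in the entries of $A$ and $B$. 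I claim that
\begin{equation*}
\Phi(z;A,B) \;=\; \det\bigl(z\, A^{\wedge 2} + M(A,B)\bigr).
\end{equation*}

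To establish the identity, assume first $\det(A)\neq 0$ and work over a splitting field in an eigenbasis $v_1,\dots,v_5$ of $A^{-1}B$ with eigenvalues $\alpha_i$. A short computation gives $(A\otimes A)^{-1}(A\otimes B + B\otimes A) = A^{-1}B\otimes I + I\otimes A^{-1}B$, whose restriction to $\wedge^2 V$ is the derivation sending $v_i\wedge v_j$ to $(\alpha_i+\alpha_j)v_i\wedge v_j$. Its characteristic polynomial is therefore $\prod_{i<j}(z+\alpha_i+\alpha_j)$, and this equals $\det(zA^{\wedge 2}+M(A,B))/\det(A^{\wedge 2})$. Invoking the classical identity $\det(A^{\wedge 2}) = \det(A)^{n-1} = \det(A)^4$ for $n=5$ and multiplying through by $\det(A)^4$ yields the displayed formula. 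Since both sides are polynomials in the entries of $A$, $B$ and in $z$ and agree on the Zariski-open locus $\{\det A\neq 0\}$, the identity extends to all $A$.

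Parts (a) and (b) are then immediate consequences. The matrix $zA^{\wedge 2}+M(A,B)$ has entries in $\Z[A,B][z]$, so its determinant lies in $\Z[A,B][z]$, proving (a); the leading coefficient in $z$ equals $\det(A^{\wedge 2}) = \det(A)^4$, giving $q_{10}(A,B)=\det(A)^4$. For the homogeneity in (b), observe that under $A\mapsto cA$ the entries of $A^{\wedge 2}$ (being quadratic in the entries of $A$) scale by $c^2$ and those of $M(A,B)$ by $c$. The coefficient of $z^k$ in $\det(zA^{\wedge 2}+M(A,B))$ is a sum of products involving $k$ columns from $A^{\wedge 2}$ and $10-k$ from $M(A,B)$, so it scales by $c^{2k+(10-k)} = c^{10+k}$. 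Similarly, $B\mapsto cB$ fixes $A^{\wedge 2}$ and scales $M(A,B)$ by $c$, giving $B$-degree $10-k$.

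Finally, for (c), suppose $f(\lambda,\mu;A,B)$ factors over $\Q$ as $f_2(\lambda,\mu)\, f_3(\lambda,\mu)$ with $\deg f_2 = 2$. Two of the roots $-\alpha_i$ of $f(\lambda,1;A,B)$ -- say $-\alpha_1$ and $-\alpha_2$ -- are the roots of $f_2(\lambda,1)\in\Q[\lambda]$; by Vieta's formulas, $\alpha_1+\alpha_2\in\Q$. Hence $z_0 := -(\alpha_1+\alpha_2)\in\Q$ is a rational root of $\Phi(z;A,B)$, since the $(i,j)=(1,2)$ factor in the defining product vanishes at $z_0$. The main obstacle in the whole argument is the determinantal formula itself: verifying that $A\otimes B + B\otimes A$ preserves $\wedge^2 V\subset V\otimes V$ and then identifying its restriction with $A^{\wedge 2}$ composed with the derivation induced by $A^{-1}B$ requires a careful linear-algebraic computation. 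Once this is in place, the three parts of the lemma follow with essentially no further work.
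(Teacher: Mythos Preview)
Your proof is correct and takes a genuinely different route from the paper's argument. The paper expands $\Phi(z;A,B)=\det(A)^4\sum_k \tilde q_k(\bfalp)z^k$, notes that each $\tilde q_k$ is symmetric in the $\alp_i$, rewrites it in terms of the elementary symmetric functions $E_i=p_{5-i}(A,B)/\det(A)$, and then appeals to a {\tt magma} computation to check that every $\tilde q_k$ has total degree at most~$4$ in the $E_i$, so that multiplying by $\det(A)^4$ clears all denominators. Your determinantal identity $\Phi(z;A,B)=\det\bigl(z\,A^{\wedge 2}+M(A,B)\bigr)$ bypasses this entirely: integrality of the $q_k$ and the precise homogeneity degrees in~$A$ and~$B$ fall out mechanically from the fact that the entries of $A^{\wedge 2}$ are quadratic in~$A$ while those of $M(A,B)$ are bilinear in $A,B$, and no computer verification is needed. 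The exterior-square formulation is more conceptual and would generalise cleanly to pencils of quadrics in other numbers of variables; the paper's approach, while more elementary in its ingredients, relies on a degree bound that is not transparent without calculation.

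One small point worth tightening: when you ``work over a splitting field in an eigenbasis $v_1,\dots,v_5$ of $A^{-1}B$'', you are tacitly assuming $A^{-1}B$ is diagonalisable. Since you are in any case establishing a polynomial identity and already invoke Zariski density for the locus $\{\det A\neq 0\}$, you may simply restrict further to the dense open set on which $f(\lam,1;A,B)$ is separable; alternatively, upper-triangularising $A^{-1}B$ over an algebraic closure yields the same list of eigenvalues $\{\alp_i+\alp_j\}_{i<j}$ for the induced operator on $\wedge^2V$ without any semisimplicity assumption.
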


\begin{proof}
Using the notation above, we can first write
$$\Phi(z;A,B)= \det (A)^4 \sum_{k=0}^{10} \tilde{q}_k(\bfalp)z^k,$$
with $\tilde{q}_k(\bfalp)$ polynomials in the roots $\alp_i$, for $1\leq i\leq 5$. Note that $\tilde{q}_k(\bfalp)$ are symmetric polynomials in the variables $\alp_i$. Hence, they can be expressed in terms of the following elementary symmetric polynomials
\begin{equation*}
\begin{split}
 E_1:=\sum_{i=1}^5 \alp_i,\quad E_2:=\sum_{i<j}\alp_i\alp_j,\quad E_3:= \sum_{i<j<k}\alp_i\alp_j\alp_k,\\ E_4:=\sum_{i<j<k<l} \alp_i\alp_j\alp_k\alp_l,\quad E_5:=\prod_{i=1}^5 \alp_i.
\end{split}
\end{equation*}
We compare equation (\ref{eqn22}) with equation (\ref{eqn23}) and obtain 
$$ \det(A) \sum_{l=0}^5 \frac{p_l(A,B)}{\det(A)}\mu^{5-l}\lam^l= \det(A) \prod_{i=1}^5 (\lam +\alp_i\mu).$$
Hence 
$$E_i=\frac{p_{5-i}(A,B)}{\det(A)},\quad\mbox{for } 1\leq i\leq 5.$$
By a short calculation in {\tt magma}, we can express each of the polynomials $\tilde{q}_k(\bfalp)$ in the elementary symmetric polynomials $E_i$, for $1\leq i\leq 5$. We find that they are of total degree at most four in this representation. Consequently, all the
$$q_k(A,B):= \det(A)^4 \tilde{q}_k(\bfalp),\quad 0\leq k\leq 10,$$
are polynomials in $\Z[A,B]$, where $q_k(A,B)$ is homogeneous of degree $10-k$ in the coefficients of $B$ and homogeneous of degree $10+k$ in the coefficients of~$A$. This completes part a) and b) of the proof of the lemma. Part c) follows directly from the construction of the resolvent polynomial $\Phi(z;A,B)$.
\end{proof}

We next need an statement analogous to Lemma \ref{irred5}, for the resolvent $\Phi(z;A,B)$. The strategy of proof is the same as that for the polynomial $f(\lam,\mu)$ itself.

\begin{proposition}\label{irred6}
Let $A\in M_{5\times 5}(\Q)$ be a symmetric matrix with $\det(A)\neq 0$. Then there exist a tuple $\bft':=(t_{ij})_{1\leq i\leq j\leq 5, (i,j)\neq (1,1)}\in \Z^{14}$ and a Zariski closed subset $V_A\subsetneq \A_\Q^{14}$ with the following property.\par
%Write $\bfc'=(c_{ij})_{1\leq i\leq j\leq 5, (i,j)\neq (1,1)}$ and $\bft'$ for the vector $(t_{ij})_{1\leq i\leq j\leq 5, (i,j)\neq (1,1)}$. 
The expression $\Phi(z;A,(b_{11},\bfc'+b_{11}\bft'))$
%$\Phi(z;A,b_{11},(c_{ij}+t_{ij}b_{11})_{1\leq i\leq j\leq 5, (i,j)\neq (1,1)})$
is absolutely irreducible as a polynomial in the variables $z$ and $b_{11}$ if $\bfc':=(c_{ij})_{1\leq i\leq j\leq 5, (i,j)\neq (1,1)}\in \A^{14}(\bar \Q)\setminus V_A(\bar \Q)$. 
\end{proposition}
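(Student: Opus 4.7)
The plan is to follow exactly the template of the proof of Proposition \ref{irred5}, with the polynomial $f(\lam,1;A,B)$ replaced by the resolvent $\Phi(z;A,B)$. The argument will have two stages: first an analogue of Lemma \ref{lem5} showing that $\Phi(z;A,B)$ is absolutely irreducible as a polynomial in $z$ and the $16$ variables $(b_{ij})_{1\le i\le j\le 5}$, and second a repeated application of Jouanolou's Bertini-type theorem to slice down to a curve in $(z,b_{11})$.

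First I would handle the special case $A=\Id_5$. Over any algebraically closed base field and a fixed specialization of the other $b_{ij}$, the resolvent $\Phi(z;\Id_5,B)$ depends only on the eigenvalues $-\alp_1,\ldots,-\alp_5$ of $B$; in particular it is invariant under the change of variables $B\mapsto T^tBT$. Following the strategy used in Lemma \ref{lemZ0}, I would exhibit a specific low-parameter slice, for instance a symmetric tridiagonal matrix with entries drawn from two free parameters $b_{11},b_{12}$, for which the polynomial $\Phi(z;\Id_5,B)$ becomes an absolutely irreducible polynomial in three variables $z,b_{11},b_{12}$. This irreducibility is a finite computation which can be verified with \texttt{magma}, together with a search for regular rational points to rule out geometric reducibility. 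Given this, Proposition \ref{Osserman} applied to $A=\Q[b_{12},\ldots,b_{55}]$ and the polynomial $\Phi(z;\Id_5,B)\in A[b_{11},z]$ yields that $\Phi$ is absolutely irreducible as a polynomial in the $16$ variables $z$ and $(b_{ij})$.

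For general symmetric $A$ with $\det(A)\neq 0$, I would reduce to the previous case by choosing an invertible $\bar{\Q}$-matrix $T$ with $T^tAT=\Id_5$; then
\[
\Phi(z;A,B)=\det(A)^4\prod_{i<j}(z+\alp_i+\alp_j)=\det(T^{-1})^{\,8}\,\Phi(z;\Id_5,T^tBT),
\]
since the roots $\alp_i$ of $f(\lam,1;A,B)$ coincide, by equation~(\ref{eqn20}), with those of $f(\lam,1;\Id_5,T^tBT)$. The linear change of variables $B\mapsto T^tBT$ preserves absolute irreducibility, so the $A=\Id_5$ case transfers directly.

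Once absolute irreducibility in the full set of variables is in hand, I would copy the slicing argument from Proposition \ref{irred5} verbatim. The hypersurface $Y\subset \A_\Q^{16}$ cut out by $\Phi(z;A,B)=0$ is geometrically integral, and the projection onto each pair $(b_{11},b_{ij})$ for $(i,j)\neq(1,1)$ is dominant because $q_{10}(A,B)=\det(A)^4$ ensures that $z$ can be solved out. Applying Jouanolou's theorem \cite[Th\'eor\`eme~6.3]{Jou} fourteen times, once for each $(i,j)\neq(1,1)$, I pick integers $(t_{ij})$ so that after the successive substitutions $b_{ij}=u_{ij}+t_{ij}b_{11}$ the resulting variety remains geometrically integral. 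This leaves an absolutely irreducible affine curve in $(z,b_{11})$, and a final application of Proposition \ref{Osserman} to $\Q[c_{12},\ldots,c_{55}][b_{11},z]$ produces the required proper Zariski closed subset $V_A\subsetneq \A_\Q^{14}$.

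The main technical obstacle is the base case: establishing absolute irreducibility of $\Phi(z;\Id_5,B)$ in all $16$ variables. Since $\Phi$ is a degree-$10$ polynomial whose coefficients are intricate polynomial expressions in the $b_{ij}$, a direct factorization argument is unappealing; the cleanest route is the finite \texttt{magma} check on a carefully chosen low-dimensional slice, as was done in Lemma \ref{lemZ0}. Everything else consists of formal manipulations paralleling the proof of Proposition \ref{irred5}.
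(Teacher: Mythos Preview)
Your proposal is correct and follows essentially the same approach as the paper: a \texttt{magma} check on a specific specialization to establish absolute irreducibility of $\Phi(z;\Id_5,B)$, then diagonalization to transfer to general $A$, then Jouanolou slicing and a final appeal to Proposition~\ref{Osserman}. The only difference is cosmetic: the paper goes straight to a two-variable slice in $(z,b_{11})$ by taking $\bft'=\mathbf{0}$ and $c_{ij}=1$ for $|i-j|\le 1$, $c_{ij}=0$ otherwise (obtaining an irreducible genus-$3$ curve with regular rational points), rather than passing through a three-variable slice first.
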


\begin{proof}
First, a short calculation using {\tt magma} shows that the polynomial $$\Phi(z;\Id_5,(b_{11},\bfc'+b_{11}\bft'))$$ is absolutely irreducible in $z,b_{11}$, if we set $\bft':=\mathbf{0}$ and 
$c_{ij}:=1$, for ${|i-j|\leq1}$, and $c_{ij}:=0$, otherwise. Indeed, in this case, the homogenization of the polynomial ${\Phi(z;\Id_5,(b_{11},\bfc'+b_{11}\bft'))}$ defines an irreducible curve of genus $3$ with two regular rational points of height less than $100$.\par
Since $\Phi(z;\Id_5,(b_{11},\bfc'+b_{11}\bft'))$ is absolutely irreducible of degree $10$ in $z$ and $b_{11}$ for these special choices of $\bfc'$ and $\bft'$, we deduce that $\Phi(z;\Id_5,B)$ is absolutely irreducible in the $16$ variables $z$, $(b_{ij})_{1\leq i\leq j\leq 5}$.\par
Now let $A\in M_{5\times 5}(\Q)$ be an arbitrary symmetric matrix with $\det(A)\neq 0$. Over $\bar \Q$, we can diagonalize $A$ with an invertible matrix $C$ such that $$C^tAC=\Id_5.$$ Then we have
\begin{equation*}
\begin{split}
f(\lam,\mu;A,B)&=\det(C)^{-2}\det (\lam C^tAC+\mu C^tBC)\\
&= \det(C)^{-2} f(\lam,\mu;\Id_5,C^tBC).
\end{split}
\end{equation*}
For the corresponding resolvents, we hence get the relation
$$\Phi(z;A,B)= \det(C)^{-8} \Phi(z;\Id_5,C^tBC).$$
We conclude that $\Phi(z;A,B)$ is an absolutely irreducible polynomial in the $16$ variables $z,(b_{ij})_{1\leq i\leq j\leq 5}$. The rest of the proof proceeds in exactly the same way as the proof of Lemma \ref{irred5}. Note that, for $A$ fixed with $\det(A)\neq 0$, the polynomial $\Phi(z;A,B)$ always has degree $10$. 
\end{proof}

\section{Proof of Theorem \ref{prop1}}

In this section, we prove Theorem \ref{prop1}. Let $A\in M_{5\times 5}(\Z)$ be fixed and assume that $\det(A)\neq 0$. Let $S^{(1)}(P)$ be the number of symmetric matrices $B\in M_{5\times 5}(\Z)$ with $|b_{ij}|\leq P$, for all $1\leq i\leq j\leq 5$, such that $f(\lam,\mu;A,B)$ has a rational root in $\lam,\mu$. Similarly, let $S^{(2)}(P)$ be the number of such matrices $B$ such that $\Phi(z;A,B)$ has a rational root in $z$. 

By Theorem \ref{thm2.2}, and the construction of the resolvent polynomial $\Phi(z;A,B)$, we can bound $N_2(P)$ by 
\begin{equation}\label{eqn13}
N_2(P)\ll S^{(1)}(P) + S^{(2)}(P).
\end{equation}

If $f(\lam,\mu;A,B)$ has a rational root in $\lam,\mu$, then, by homogeneity, it also has a solution $\lam,\mu\in \Z^2$ with $\gcd(\lam,\mu)=1$. Since the coefficient of $\lam^5$ in $f(\lam,\mu)$ is equal to $\det(A)$, we deduce that $\mu | \det(A)$. For a divisor $d|\det (A)$, we define $S_d^{(1)}(P)$ to be the number of symmetric matrices $B\in M_{5\times 5}(\Z)$ with coefficients bounded by $P$ such that $f(\lam,d;A,B)$ has an integer root in $\lam$. Then we can bound
\begin{equation}\label{eqn14}
S^{(1)}(P) \leq \sum_{d|\det (A)}S_d^{(1)}(P),
\end{equation}
where the sum over $d$ runs over all positive and negative divisors of $\det (A)$.\par
Similarly, if $\Phi(z;A,B)$ has a rational root, say $\smash{z=\frac{r}{s}}$, with $r,s\in \Z^2$ and $\gcd(r,s)=1$ then $\smash{s|\det(A)^4}$. For any divisor $\smash{d|\det(A)^4}$, we let $\smash{S_d^{(2)}(P)}$ be the number of symmetric  matrices $B\in M_{5\times 5}(\Z)$ with coefficients bounded by $P$ such that $d^{10} \Phi(d^{-1}z;A,B)$ (which is again a polynomial with integer coefficients) has an integer root in $z$. Hence, we have the upper bound
$$S^{(2)}(P)\leq \sum_{d|\det(A)^4} S_d^{(2)}(P),$$
where again the sum is over all positive and negative divisors $d$ of $\det(A)^4$.\par
With $d|\det(A)$ fixed, we now proceed to bound $S_d^{(1)}(P)$. We recall the notation
$$f(\lam,\mu;A,B)=\sum_{l=0}^5 p_l(A,B)\mu^{5-l}\lam^l,$$
with $p_l(A,B)$ polynomials of degree $5-l$ in the coefficients of $B$. If $B$ is a matrix with coefficients bounded by $P$ then $p_l(A,B)\ll P^{5-l}$. As soon as $f(\lam,d;A,B)$ has a complex root $\lam_0$, then we deduce that there is a positive constant $C_0\geq 1$ such that $|\lam_0|\leq C_0 P$, by Theorem (27,3) in \cite{Ma} (see also Lemma 1 in \cite{Die}). 
Let $\bfv'$ and $Z_A$ be as in Lemma \ref{irred5}. Then $f(\lam,1;A,(b_{11},\bfc'+b_{11}\bfv'))$ 
is absolutely irreducible in $\lam,b_{11}$ if $\bfc'\notin Z_A$. 
%hence $f(\lam,1;A,(db_{11},d\bfc'+db_{11}\bfv')$ is absolutely irreducible for $d\bfc'\notin %Z_A$.
By a linear change of variables, we deduce that 
$$f(\lam,d;A,(b_{11},\bfc'+b_{11}\bfv'))=f(\lam,1;A, (db_{11},d\bfc'+db_{11}\bfv'))$$
is absolutely irreducible in the two variables $\lam,b_{11}$ if $\bfc'\notin d^{-1}Z_A$. 
Let $R_d^{(1)}(P)$ be the number of vectors $(b_{11},\bfc')\in (\Z\cap [-P,P])^{15}$ such that the polynomial $f(\lam,d;A,(b_{11},\bfc'+b_{11}\bfv'))$ has an integer root in $\lam$. Then there is a positive constant $C_1$ such that
\begin{equation}\label{eqn17}
S_d^{(1)}(P)\leq R_d^{(1)}(C_1 P).
\end{equation}
For $\bfc'\in \Z^{14}$, we define
$$T_d^{(1)}(\bfc';P) := \sharp\{(b_{11},\lam)\in (\Z\cap[-P,P])^2:f(\lam,d;A,(b_{11},\bfc'+b_{11}\bfv'))=0\}.$$
Let
$$E_d^{(1)}(P):=\sharp\{\bfc'\in (\Z\cap [-P,P])^{14}: \bfc'\in d^{-1}Z_A\}.$$
Then we can bound $R_d^{(1)}(C_1P)$ by
$$R_d^{(1)}(C_1P)\ll \sum_{\substack{\bfc'\in (\Z\cap [-C_1P,C_1P])^{14}\\ \bfc' \notin d^{-1}Z_A}}\!\!\!\!\!\!\!\!\! T_d^{(1)}(\bfc'; C_2P)+PE_d^{(1)}(C_1P),$$
for a sufficiently large constant $C_2$. 
An application of Theorem \ref{thm2.3} gives the bound
$$T_d^{(1)}(\bfc'; C_2P) \ll_\eps P^{1/5+\eps},$$
with an implied constant that is independent of $\bfc'$. On the other hand, as $Z_A\subsetneq \A_\Q^{14}$ is Zariski closed, we have
$$E_d^{(1)}(C_2 P)\ll P^{13}.$$
Together these estimates lead to the bound
$$R_d^{(1)}(C_1P)\ll_\eps P^{14+1/5+\eps},$$
for any $\eps >0$. Here, the implied constant may depend on $d$ and $C_1$, but is independent of $P$. Since $A$ is considered fixed, we can trivially perform the summation in equation (\ref{eqn14}), and together with equation (\ref{eqn17}), we obtain the bound
$$S^{(1)}(P)\ll_\eps P^{14+1/5+\eps}.$$
The bound for $S^{(2)}(P)$ is obtained in the same way as the bound for $S^{(1)}(P)$, with Proposition \ref{irred6} in place of Proposition \ref{irred5}. In fact, since $\Phi(z;A,B)$ is typically irreducible of degree $10$, one obtains the bound
$$S^{(2)}(P)\ll_\eps P^{14+1/10+\eps},$$
for any $\eps >0$.
%note that also the bound on a root of \Phi(z;A,B) remains unchanged, i.e. every root satisfies z\ll P. 

\begin{remark}\label{rk10}
Let $A$ be as above. As the polynomials $f(\lam,1;A,B)$ and $\Phi(z;A,B)$ are irreducible in $\bar\Q(b_{ij})_{1\leq i\leq j\leq 5}[\lam]$ and $\bar\Q(b_{ij})_{1\leq i\leq j\leq 5}[z]$, respectively, the polynomial interpretation of thin sets as in section 9.1 of \cite{Serre} shows that the matrices $B$ counted by $N_2(P)$ indeed form a thin subset. 

\end{remark}

\bibliographystyle{amsbracket}
\providecommand{\bysame}{\leavevmode\hbox to3em{\hrulefill}\thinspace}

\setlength\parindent{0mm}

\end{document}